\documentclass[12pt, twoside, reqno] {amsart}

\usepackage{amsfonts}
\usepackage{amssymb}
\usepackage{latexsym}
\usepackage{epsf,graphicx}
\usepackage{epsf,graphicx,latexsym,%
%subfigure
}

\usepackage{xcolor}

\newcommand{\be} {\begin{eqnarray}}
\newcommand{\ee} {\end{eqnarray}}
\newcommand{\bep} {\begin{eqnarray*}}
\newcommand{\eep} {\end{eqnarray*}}

\textwidth 135mm \textheight 200mm

%\newcommand {\Fan}{\mathop{\rm Fan}\nolimits}

%\newcommand {\Snail}{\mathop{\rm Snail}\nolimits}

%\newcommand {\Univ}{\mathop{\rm Univ}\nolimits}
%{\mathop{\mathfrak{U}}\nolimits(\Delta)}

%\newcommand {\hol}{\mathop{\rm hol}\nolimits}
\newcommand {\Hol}{\mathop{\rm Hol}\nolimits}

\newcommand {\Id}{\mathop{\rm Id}\nolimits}

\renewcommand {\Re}{\mathop{\rm Re}\nolimits}

\newcommand {\JJ}{\mathcal{J}}

\newcommand {\A}{\mathcal{A}}
\newcommand {\BB}{\mathcal{B}}
%\newcommand {\M}{{\cal M}}

%\font\bbb = msbm10
%\def\Bbb#1{\hbox{\bbb #1}}
\newcommand{\R}{{\mathbb R}}

\newcommand{\C}{{\mathbb C}}

\newcommand {\D}{\mathbb{D}}

\newtheorem{remar}{Remark}[section]
\newtheorem{examp}{Example}[section]
\newtheorem{defin}{Definition}[section]
\newtheorem{corol}{Corollary}[section]

\newtheorem{theorem}{Theorem}[section]
\newtheorem{lemma}{Lemma}[section]

\newcommand{\rema}{\begin{remar}\rm}
\newcommand{\erema}{$\blacktriangleright$\end{remar}}

\newcommand{\exa}{\begin{examp}\rm}
\newcommand{\eexa}{$\blacktriangleright$\end{examp}}

\def\lwvec(#1 #2){\linewd 0.1
           \lvec(#1 #2)
           \linewd 0.05}

\begin{document}

\title{Geometric features of nonlinear resolvents in the unit disk}

\author[M. Elin]{Mark Elin}

\address{Department of Mathematics,
         Ort Braude College,
         Karmiel 21982,
         Israel}

\email{mark$\_$elin@braude.ac.il}

\author[F. Jacobzon]{Fiana Jacobzon}

\address{Department of Mathematics,
         Ort Braude College,
         Karmiel 21982,
         Israel}

\email{fiana@braude.ac.il}

\keywords{nonlinear resolvent, semigroup generator, distortion theorem, order of starlikeness, quasiconformal extension}
%\subjclass{MSC 2020: Primary 30C45, 30D05; Secondary 30C62, 37F44, 47H20}

\begin{abstract}
We study nonlinear resolvents of holomorphic generators of one-parameter semigroups acting in the open unit disk.
The class of nonlinear resolvents can be studied in the framework of geometric function theory because it consists of univalent functions.

 In this paper we establish distortion and covering results, find order of starlikeness and of strong starlikeness of resolvents. This provides that any resolvent admits quasiconformal  extension to the complex plane $\C$. In addition, we obtain some characteristics of semigroups generated by these resolvents.

 {\footnotesize 2020 Mathematics Subject Classification: Primary 30C45, 30D05; Secondary 30C62, 37F44, 47H20}
% Key words and phrases: nonlinear resolvent, semigroup generator, distortion theorem, order of strong starlikeness, quasiconformal extension}

%2000 Mathematics Subject Classification: 30C45, 47H20}
\end{abstract}
\maketitle
%\centerline\today
\section{Preliminaries}\label{sect-intro}

Let $D$ be a domain in the complex plane $\C$.  Denote the set of holomorphic functions on $D$ by $\Hol(D,\C)$, and by $\Hol(D) := \Hol(D,D)$, the set of all holomorphic self-mappings of $D$.  In what follows we use the notation $D_r$ for the open disk of radius $r$, namely, $D_r:=\left\{z:\ |z|<r\right\}$. Also we denote $\D=D_1$, the open unit disk.

Let $\Omega$ be the subclass of $\Hol(\D)$ consisting of functions vanishing at the origin:
\begin{equation}\label{def-U}
\Omega=\{ \omega \in \Hol(\D):\ \omega(0)=0 \}.
\end{equation}
The identity mapping on $\D$ will be denoted by $\Id$.

 The aim of this paper is to study a class of univalent functions in the open unit disk that occurs in nonlinear dynamical systems. This class consists of nonlinear resolvents, which, by their definition, are solutions to some functional equations involving so-called generators.
Recall that a mapping $f\in\Hol(\D,\C)$ is called an (infinitesimal) generator if for every $z\in\D$ the Cauchy problem
\begin{equation}  \label{nS1}
\left\{
\begin{array}{l}
\frac{\partial u(t,z)}{\partial t}+f(u(t,z))=0    ,     \vspace{2mm} \\
u(0,z)=z,%
\end{array}%
\right.
\end{equation}%
has a unique solution $u=u(t,z)\in\D$ for all $t\geq 0$. In this case, the unique solution of \eqref{nS1} forms a semigroup of holomorphic self-mappings of the open unit disk $\D$ generated by $f$; see, for example, \cite{B-C-DM-book, E-R-Sbook, E-S-book, R-S1, SD}. It turns out that generators can be characterized as follows.

 \begin{theorem}[see \cite{R-S1, SD, E-R-Sbook, E-S-book} for detail] \label{teorA}
 Let $f\in \Hol(\D , \C),\  f\not\equiv0$. The following statements are equivalent:
 \begin{enumerate}
 \item[(i)] $f$ is a generator on $\D$;

\item[(ii)] there exist a point $\tau\in \overline\D$ and a function $p\in\Hol(\D,\C)$ with ${\Re p(z)\ge0}$ such that
\begin{equation}\label{b-p}
f(z)=(z-\tau )(1-z\overline{\tau })p(z),\quad z\in\D;
\end{equation}

\item[(iii)] $f$ satisfies the so-called range condition:
\[
\left(\Id +rf\right)(\D)\supset\D\qquad    \mbox{for all }\quad r>0,
\]
and $G_r:=(\Id+rf)^{-1}$ is a well-defined self-mapping of $\D$.
\end{enumerate}
 \end{theorem}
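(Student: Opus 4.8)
I would prove the three equivalences as a single cycle $(\mathrm{i})\Rightarrow(\mathrm{ii})\Rightarrow(\mathrm{iii})\Rightarrow(\mathrm{i})$, which simultaneously yields the classical Berkson--Porta representation and the Reich--Shoikhet range condition. Throughout I abbreviate $q_\tau(z):=(z-\tau)(1-z\overline{\tau})$.

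For $(\mathrm{i})\Rightarrow(\mathrm{ii})$, let $F_t=u(t,\cdot)$ be the semigroup generated by $f$. By the Denjoy--Wolff theorem the commuting family $\{F_t\}$ has a common attracting point $\tau\in\overline{\D}$: either an interior fixed point $\tau\in\D$ with $F_t(\tau)=\tau$ (equivalently $f(\tau)=0$), or a boundary point $\tau\in\partial\D$ with $F_t\to\tau$. Put $p(z):=f(z)/q_\tau(z)$; the only zero of $q_\tau$ inside $\D$ is $z=\tau$ in the interior case, where $f(\tau)=0$ forces $p$ to be holomorphic. It remains to check $\Re p\ge0$. In the interior case I would normalize $\tau=0$ by conjugating with a disk automorphism (which preserves both the class of generators and the shape of the representation), so that $f(z)=zp(z)$. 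Since $F_t$ fixes the origin, the Schwarz lemma gives $|F_t(z)|\le|z|$, whence $\frac{d}{dt}|F_t(z)|^2\big|_{t=0^+}\le0$; as this derivative equals $-2\Re[\overline{z}f(z)]=-2|z|^2\Re p(z)$, we conclude $\Re p(z)\ge0$. The boundary case I would reduce, via a Cayley transform, to the model of the right half-plane $\{\Re w>0\}$, where the attracting point lies at $\infty$ and the same infinitesimal monotonicity is extracted from Julia's lemma (contraction of horocycles).

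For $(\mathrm{ii})\Rightarrow(\mathrm{iii})$, fix $r>0$ and study $\Phi_r:=\Id+rf$. After the normalization $\tau=0$ one has $\Phi_r(w)=w\bigl(1+rp(w)\bigr)$ with $\Re(1+rp)\ge1$. On a circle $|w|=\rho<1$ the curve $w\mapsto 1+rp(w)$ remains in the right half-plane, hence winds $0$ times about the origin, so $\Phi_r(\partial D_\rho)$ winds exactly once about $0$; moreover $|\Phi_r(w)|=\rho\,|1+rp(w)|\ge\rho$ there. Since this image curve lies in $\{|\zeta|\ge\rho\}$, every $z$ with $|z|<\rho$ lies in the same complementary component as $0$, so by the argument principle $\Phi_r(w)=z$ has exactly one solution $w\in D_\rho$, and $\Phi_r$ is univalent on $D_\rho$. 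Letting $\rho\to1$ gives $\Phi_r(\D)\supseteq\D$ and shows that $G_r=\Phi_r^{-1}$ is a well-defined self-map of $\D$; the boundary case $\tau\in\partial\D$ is handled by the identical winding computation in the half-plane model.

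For $(\mathrm{iii})\Rightarrow(\mathrm{i})$ I would invoke the nonlinear Crandall--Liggett exponential formula. Each $G_r$ is a holomorphic self-map of $\D$, so by the Schwarz--Pick lemma it is automatically nonexpansive in the Poincar\'e metric; combining this with the resolvent identity obtained by differentiating $G_r(z)+rf(G_r(z))=z$, I would show that $F_t(z):=\lim_{n\to\infty}G_{t/n}^{\circ n}(z)$ exists locally uniformly and defines a one-parameter semigroup of self-mappings. Differentiating the resolvent equation at $r=0$ gives $\partial_rG_r(z)\big|_{r=0}=-f(z)$, identifying the generator of $\{F_t\}$ as $f$ and closing the cycle.

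The routine ingredients are the interior normalization and the argument-principle count. The genuinely delicate points are, first, the boundary Denjoy--Wolff case in $(\mathrm{i})\Rightarrow(\mathrm{ii})$, which requires the Julia--Wolff--Carath\'eodory machinery to produce the correct infinitesimal inequality at a boundary fixed point, and second, the convergence of the exponential product in $(\mathrm{iii})\Rightarrow(\mathrm{i})$, where hyperbolic nonexpansiveness must be combined with uniform resolvent estimates to guarantee the limit. I expect the boundary case of the Berkson--Porta step to be the main obstacle.
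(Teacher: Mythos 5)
The paper does not prove this theorem at all --- it is quoted as a known result with references to Reich--Shoikhet, Shoikhet, and Elin--Shoikhet --- so there is no in-paper argument to compare against. Your cycle $(\mathrm{i})\Rightarrow(\mathrm{ii})\Rightarrow(\mathrm{iii})\Rightarrow(\mathrm{i})$ is essentially the standard proof from those references: the infinitesimal Schwarz/Julia inequality for the Berkson--Porta step, an argument-principle count for the range condition, and the Crandall--Liggett product formula to recover the semigroup, and as a sketch it is sound. One caveat worth making explicit: in $(\mathrm{ii})\Rightarrow(\mathrm{iii})$ the inequality $|\Phi_r(w)|\ge\rho$ on $|w|=\rho$ genuinely depends on the normalization $\tau=0$; for boundary $\tau$ one has $\Re\bigl[\overline{w}(w-\tau)(1-w\overline{\tau})\bigr]\le0$ on circles, so the winding computation is \emph{not} ``identical'' in the disk and must be carried out in the half-plane model, where $\partial\{\Re w>c\}$ is non-compact and an exhaustion or normal-families limit is needed to close the argument. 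Together with the convergence of the exponential formula in $(\mathrm{iii})\Rightarrow(\mathrm{i})$, which you correctly flag, these are the two places where the sketch defers to the cited machinery rather than supplying it.
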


We notice that formula \eqref{b-p} is known as the {\it Berkson--Porta representation} after the seminal work \cite{B-P} by Berkson and Porta. The mappings $G_r\in\Hol(\D),\ r>0,$ are called the {\it nonlinear resolvents} of the generator $f$, the net $\{G_r\}_{r>0}$ is the {\it resolvent family} for $f$. These are the main objects of the study in this paper.

Numerous properties of nonlinear resolvents can be found in the books \cite{SD, R-S1, E-R-Sbook}. In particular, the solution of the Cauchy problem~\eqref{nS1} can be reproduced by the following exponential formula:
\begin{equation}\label{expo-f}
u(t,\cdot) = \lim_{n\rightarrow \infty } \left(G_{\frac{t}{n}}\right)^{[n]},
\end{equation}
where $G^{[n]}$ means the $n$-th iterate of a self-mapping $G$ and the limit exists in the topology of uniform convergence on compact subsets of $\D.$

Since the representation \eqref{b-p} in Theorem \ref{teorA} is unique, it follows that every generator must have at most one null point in $\D $. This point $\tau $ is known to be the \textit{Denjoy--Wolff point} for the semigroup  $\left\{u(t,\cdot)\right\} _{t\geq 0}$ defined by \eqref{nS1} as well as for the resolvent family $\{G_r\}_{r>0}$. More precisely, if the function $p$ in assertion (ii) of Theorem~\ref{teorA} satisfies $\Re p(z) > 0$, then
\begin{equation}\label{D-Wp}
\tau= \lim\limits_{t\rightarrow \infty } u(t,z) =\lim_{r\to\infty}G_{r}(z) .
\end{equation}
Moreover, both these limits attain not only pointwise, but uniformly on compact subsets of the open unit disk.

\vspace{2mm}
In this paper we concentrate on the  case $\tau=0$.  Thus $f(z)=zp(z)$ with $\Re p(z)>0$  by the Berkson--Porta formula~\eqref{b-p}  and $\lim\limits_{r\to\infty} G_r(z)=0,$ uniformly on compact subsets of $\D$ by formula~\eqref{D-Wp}. In this situation there is a simply verifiable  condition providing that the convergence of the semigroup to its Denjoy--Wolff point $\tau=0$ is uniform on the {\it whole disk}~$\D$:

\begin{theorem}[see \cite{FSG, E-S-S, E-R-Sbook}]\label{thm_kappa}
Let $\kappa>0$ be a constant. The semigroup $\{u(t,\cdot)\}_{t\ge0}$ generated by~$f$, $f(z)=zp(z),$ satisfies the estimate $|u(t,z)|\le |z|e^{-\kappa t }$ for all $t>0$ and $z\in\D$ (and consequently $u(t,z)\to0$ as $t\to\infty$ uniformly on $\D$)  if and only if $\Re p(z)\ge\kappa,\ z\in\D$.
\end{theorem}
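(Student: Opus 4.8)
The plan is to track the evolution of the squared modulus $|u(t,z)|^2$ along the semigroup trajectory, which converts the functional statement into an elementary differential (in)equality. Writing $w=u(t,z)$ and differentiating with respect to $t$, then invoking the Cauchy problem~\eqref{nS1} together with the Berkson--Porta form $f(w)=wp(w)$, I would obtain
\begin{equation*}
\frac{\partial}{\partial t}|u(t,z)|^2=\frac{\partial}{\partial t}\left(w\,\overline{w}\right)=-wp(w)\,\overline{w}-w\,\overline{wp(w)}=-2\,|u(t,z)|^2\,\Re p(u(t,z)).
\end{equation*}
This single identity drives both implications.

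For the sufficiency direction, assume $\Re p(z)\ge\kappa$ on $\D$. Setting $\phi(t):=|u(t,z)|^2$ and using that $u(t,z)\in\D$, the identity above yields the differential inequality $\phi'(t)\le-2\kappa\,\phi(t)$. Integrating this — equivalently, observing that $t\mapsto\phi(t)e^{2\kappa t}$ is nonincreasing, which is Gronwall's inequality in disguise — gives $\phi(t)\le\phi(0)e^{-2\kappa t}=|z|^2e^{-2\kappa t}$, that is, the desired estimate $|u(t,z)|\le|z|e^{-\kappa t}$.

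For the necessity direction, assume the estimate $|u(t,z)|\le|z|e^{-\kappa t}$ holds for all $t>0$ and $z\in\D$. Fix $z\ne0$ and compare $\phi(t)=|u(t,z)|^2$ with $\psi(t):=|z|^2e^{-2\kappa t}$. Both functions are smooth, both equal $|z|^2$ at $t=0$, and $\phi(t)\le\psi(t)$ for all $t\ge0$; hence their one-sided derivatives satisfy $\phi'(0)\le\psi'(0)$. Evaluating the identity at $t=0$, where $u(0,z)=z$, this reads $-2|z|^2\,\Re p(z)\le-2\kappa|z|^2$, and dividing by $-2|z|^2<0$ gives $\Re p(z)\ge\kappa$. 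The remaining value $z=0$ then follows from continuity of $p$.

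I expect no serious obstacle here: the whole argument rests on the clean evolution law for $|u|^2$, after which each direction is a short monotonicity/comparison step. The only point meriting a little care is the necessity direction, where one must justify differentiating at the endpoint $t=0$; this is legitimate because $t\mapsto u(t,z)$ is smooth (indeed real-analytic) as the solution of an autonomous ODE with holomorphic right-hand side, so the one-sided derivative comparison of two touching functions is valid.
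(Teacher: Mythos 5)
Your proof is correct, and it is essentially the standard argument: the paper itself states Theorem~\ref{thm_kappa} without proof, citing \cite{FSG, E-S-S, E-R-Sbook}, and the proof given there is exactly this computation of $\frac{\partial}{\partial t}|u(t,z)|^2=-2|u(t,z)|^2\Re p(u(t,z))$, followed by a Gronwall-type integration for sufficiency and a one-sided derivative comparison at $t=0$ for necessity. Your handling of the endpoint differentiation (and the trivial point $z=0$ by continuity of $\Re p$) is sound, so there is nothing to add.
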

The number $\kappa$ in this theorem is called {\it exponential squeezing coefficient}.

To present another property of semigroups, which will appear below, we recall that semigroups by their definition are well-defined for real non-negative values of the parameter $t$ only. However, it may happen that for every fixed $z\in\D$, the semigroup $\{u(\cdot,z)\}$ can be   analytically extended to some  sector in the complex plane. Analyticity of semigroups was recently studied in \cite{A-C-P, E-J-17, E-S-Ta}, see also \cite[Chapter 6]{E-R-Sbook}. The following fact will be used in the sequel.

\begin{theorem}\label{thm-analyt}
  Let $\alpha,\beta\in(0,\frac\pi2)$. The semigroup $\{u(t,\cdot)\}_{t\ge0}$ generated by~$f$, $f(z)=zp(z),$ can be analytically extended to the sector $\{t\in\C: \ \arg t\in(-\alpha,\beta)\}$ for all $z\in\D$ if and only if $-\frac\pi2+\alpha<\arg p(z)<\frac\pi2-\beta,\ z\in\D$.
\end{theorem}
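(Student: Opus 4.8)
The plan is to transfer the question of analytic continuation in the time variable into a statement about the \emph{rotated} vector fields $e^{i\theta}f$, and then to read off the answer from the Berkson--Porta representation of Theorem~\ref{teorA}. The bridge is the following observation: if $u(t,z)$ is analytic in $t$ on the sector, then for a fixed direction $\theta\in(-\alpha,\beta)$ the function $v_\theta(s,z):=u(se^{i\theta},z)$ satisfies $\frac{\partial}{\partial s}v_\theta=-e^{i\theta}f(v_\theta)$ with $v_\theta(0,z)=z$, so $\{v_\theta(s,\cdot)\}_{s\ge0}$ is precisely the semigroup generated by $e^{i\theta}f$. Thus the first step is to establish the equivalence: $u$ admits an analytic extension to $\{\arg t\in(-\alpha,\beta)\}$ as a family of self-maps of $\D$ if and only if $e^{i\theta}f$ is a generator for every $\theta\in(-\alpha,\beta)$.

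One implication of this equivalence is immediate from the computation above: an analytic extension produces, along each ray, the flow of $e^{i\theta}f$ inside $\D$, which is the defining property of a generator in Theorem~\ref{teorA}(i). For the converse I would use that $f$ is holomorphic, so the Cauchy problem $\dot w=-f(w)$, $w(0)=z$, is a \emph{holomorphic} ODE whose solution depends holomorphically on the complex time parameter as long as the trajectory remains in $\D$. When $e^{i\theta}f$ is a generator for all $\theta\in(-\alpha,\beta)$, each ray $\{se^{i\theta}:s\ge0\}$ lies in the domain of existence of this flow, and I would patch these rays into a single holomorphic function $u(t,z)$ on the whole sector, invoking uniqueness of solutions to guarantee single-valuedness.

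Once the equivalence is in hand, I would apply Theorem~\ref{teorA}. Since $\tau=0$ we have $f(z)=zp(z)$ with $\Re p>0$, so $e^{i\theta}f(z)=z\bigl(e^{i\theta}p(z)\bigr)$ vanishes at the interior point $0$; by uniqueness of the Berkson--Porta data this forces $0$ to be the null point of $e^{i\theta}f$ as well, and hence $e^{i\theta}f$ is a generator exactly when $\Re\bigl(e^{i\theta}p(z)\bigr)\ge0$ for all $z\in\D$. Requiring this for every $\theta\in(-\alpha,\beta)$ is equivalent to $-\frac\pi2+\alpha\le\arg p(z)\le\frac\pi2-\beta$ for all $z$, since $\Re(e^{i\theta}p)\ge0$ means $\arg p\in[-\frac\pi2-\theta,\frac\pi2-\theta]$ and the binding constraints occur as $\theta\to\beta^-$ and $\theta\to-\alpha^+$. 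To upgrade these non-strict bounds to the strict inequalities in the statement, I would invoke the open mapping theorem: for non-constant $p$ the image $p(\D)$ is open, hence $\arg p(\D)$ is an open interval whose supremum and infimum are not attained, so $\arg p(z)<\frac\pi2-\beta$ and $\arg p(z)>-\frac\pi2+\alpha$ hold pointwise.

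The main obstacle I anticipate is the analytic crux of the converse direction: showing that the flows defined separately along each ray fit together into a genuinely holomorphic, single-valued function of the complex variable $t$ on the full two-dimensional sector, rather than merely being holomorphic along individual rays. I would address this either by verifying the Cauchy--Riemann equation $\partial_{\bar t}u=0$ directly from $\partial_t u=-f(u)$ and the holomorphy of $f$, or by appealing to the holomorphic dependence of solutions of $\dot w=-f(w)$ on complex time together with a connectedness argument on the domain of existence of the flow.
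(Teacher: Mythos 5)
The paper does not actually prove Theorem~\ref{thm-analyt}: it is quoted as a known fact from \cite{A-C-P, E-J-17, E-S-Ta} and \cite[Chapter~6]{E-R-Sbook}. Your strategy --- reduce analytic extendability to the statement that $e^{i\theta}f$ is a generator for every $\theta\in(-\alpha,\beta)$, and then read the condition on $\arg p$ off the Berkson--Porta representation --- is precisely the route taken in those references, so in spirit you are reproducing the standard proof. The forward direction and the reduction to $\Re\bigl(e^{i\theta}p(z)\bigr)\ge0$ are fine (your appeal to uniqueness of the Berkson--Porta data to pin $\tau=0$ is correct, since a nonzero generator vanishing at an interior point has that point as its Berkson--Porta point). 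The patching step in the converse is indeed the analytic crux and you only sketch it; to close it you should say explicitly that the set of complex times $t$ for which the maximal solution of $\dot w=-f(w)$, $w(0)=z$, exists and stays in $\D$ is open, contains every ray $\R_{+}e^{i\theta}$ with $\theta\in(-\alpha,\beta)$ by hypothesis, and that the sector is simply connected, so the locally defined holomorphic flow glues to a single-valued holomorphic function of $t$. That is standard but not free.

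The one genuine gap is the upgrade from the closed condition $-\frac\pi2+\alpha\le\arg p(z)\le\frac\pi2-\beta$ (which is what the intersection over $\theta\in(-\alpha,\beta)$ actually yields) to the strict inequalities in the statement. Your open-mapping argument handles non-constant $p$, and there you should also justify that $\arg p(\D)$ is an interval: $p$ is zero-free (by the minimum principle for $\Re p$), and since $\arg p$ ranges in an interval of length less than $\pi$ a continuous branch exists, so the image is a connected open subset of $\R$. But for constant $p$ the argument collapses, and in fact the strict ``only if'' is genuinely false there: for $f(z)=cz$ with $\arg c=\frac\pi2-\beta$ the semigroup $u(t,z)=ze^{-ct}$ is entire in $t$ and maps $\D$ into itself for all $\arg t\in[-\pi+\beta,\beta]\supset(-\alpha,\beta)$, yet $\arg p\not<\frac\pi2-\beta$. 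So either you must treat the constant case separately (where the correct equivalent condition is the non-strict one), or note that the statement, as is common in this literature, is implicitly read for non-constant $p$ or for extensions to the closed sector. As it stands, your proof cannot be completed verbatim for constant $p$ because the claim itself fails in that edge case.
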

In this connection we also notice that due to the exponential formula~\eqref{expo-f}, the analyticity of a semigroup in some sector follows from the analyticity of all the resolvents in the same sector; see \cite[Section~6.2]{E-R-Sbook}.

 \vspace{2mm}

Besides their importance in dynamical systems,  nonlinear resolvents form a class of self-mappings of $\D$, which is interesting itself from the point of view of geometric function theory. For instance, it can be easily seen that nonlinear resolvents are univalent in the open unit disk.

Significant results concerning additional geometric aspects of nonlinear resolvents of generators vanishing at zero and such that $f'(0)>0$ were discovered recently in \cite{E-S-S}. To remind some of them and to present our subsequent results, recall several important classes of univalent functions intensively studied in geometric function theory.

\begin{defin}\label{def-starlike}
  Let $h,\ h(0)=0,$ be a univalent function in the open unit disk $\D$. Then $h$ is
  \begin{itemize}
\item [] \hspace{-6mm}starlike of order $\alpha\in(0,1) $ if $\Re\left(\frac{zh'(z)}{h(z)}\right)>\alpha$ for all $z\in\D$;
\item [] \hspace{-6mm}$\theta$-spirallike of order $\alpha\in(0,1) $ if $\Re \left(e^{-i\theta}\frac{zh'(z)}{h(z)}\right)>\alpha \cos \theta$ for all $z\in\D$;
\item []  \hspace{-6mm}strongly starlike of order $\beta\in(0,1)$ if $\left|\arg\frac{zh'(z)}{h(z)}\right|<\frac{\pi\beta}{2}$ for all $z\in\D$;
\item [] \hspace{-6mm}hyperbolically convex if $h \in \Hol(\D)$ and ${\Re\left(\frac{zf''(z)}{f'(z)}+1+\frac{2z\overline{f(z)}f'(z)}{1-|f(z)|^2}\right)>0.}$
  \end{itemize}
\end{defin}
Starlike and spirallike functions are classical objects of the study in geometric function theory. Actually, $\theta$-spirallike functions with $\theta=0$ are starlike.   The reader can be referred to the monograph \cite{G-K} and reference therein (see also \cite{Ke-Me}).
 As to hyperbolically convex functions, see, for example, \cite{Ma-Mi-94}. It was proved in \cite{Me-Pomm} that every hyperbolically convex function is starlike of order $\frac12$\,.

Among the results proved in \cite{E-S-S} we mention here that:
\begin{itemize}
  \item Any resolvent $G_r$ is a hyperbolically convex self-mapping of $\D$ and, consequently, a  starlike function of order $\frac12\,.$
  \item Any resolvent $G_r$ satisfies $\Re \frac{G_r(z) }{z}>\frac{1}{2(1+rf'(0))}$ and consequently, by Theorem~\ref{teorA}, is a generator. Moreover, by Theorem~\ref{thm_kappa}, the semigroup generated by $G_r$ converges to $0$ uniformly on~$\D$ with an exponential squeezing coefficient $\kappa =1/[2(1+rf'(0))] $.
  \item  If a generator $f$ is a starlike function of order $\alpha>\frac12$, then any resolvent $G_r$ extends to a $(\sin\pi\alpha)$-quasiconformal mapping of $\C$.
      \end{itemize}

In this paper, we present a general approach to the study of nonlinear resolvents in the framework of geometric function theory. On this way we establish distortion and covering results. Next, we supplement and enhance the above results from \cite{E-S-S} and extend them to a wider class of generators. Namely, we consider resolvents of generators $f$ such that $f(0)=0$ and $\Re f'(0)>0$. Denoting $q:=f'(0)$, we prove the following results.
\begin{itemize}
\item [$\star$] Any function $G_r,\ r>\frac6{\Re q}\,,$ is $\theta$-spirallike of order greater than $\frac{(r\Re q)^2\cos \theta-6r\Re q}{(r\Re q)^2 \cos \theta -36 \cos \theta}$ for any $\theta$ with $|\theta| \leq \arccos \frac{6}{r\Re q}$. Also $G_r$ is strongly starlike of order less than $\frac2\pi \arcsin \frac{6}{r\Re q}\,$. Hence $G_r$ admits $\frac{6}{r\Re q}\,$-\,quasiconformal  extension to $\C$ with no additional condition on $f$.

  \item [$\star$] The resolvent family $\{G_r\}_{r>0}$ converges to $0$ uniformly on~$\D$ as ${r \to \infty}$, while the net of normalized resolvents $\{(1+rq)G_r\}_{r>0}$ converges to the identity mapping  as $r\to \infty$, uniformly on compact subsets of $\D$.
  \item[$\star$]  For the semigroup generated by  $G_r,\ r>\frac6{\Re q}\,,$ we discover an exponential squeezing coefficient $\kappa$ as a function depending on $r$. In the particular case where $q$ is real, $\kappa =1/[2^{B(r)}(1+rq)] $, where $B(r)\to0$ as $r\to\infty$.
  \item[$\star$]  Also, we state that this semigroup admits analytic extension with respect to the parameter  $t$ to the sector of opening $\pi\,\frac{r\Re q-6}{r\Re q+6}\,.$
   \end{itemize}

\medskip

\section{Auxiliary results}\label{sect-Pre}
\setcounter{equation}{0}

Let $\alpha, \beta \in\C$ with $\Re \alpha \overline{\beta} >0.$ Consider the class $\A_{\alpha,\beta}$ consisting of functions holomorphic in the open unit disk $\D$ and satisfying the following conditions
  \begin{equation}\label{ineq}
 \A_{\alpha,\beta}=\!\left\{ F: F(0)=F'(0)-\beta=0,\,  \Re \frac1\alpha\left(\frac{F(z)}{ z}-\beta\right) >-\frac 1 2\!\right\}\!.
  \end{equation}
Note that if we define
\begin{equation*}%\label{psi2}
\psi(z)=\beta+\frac{\alpha z}{1-z}\,,
\end{equation*}
then
\begin{equation}\label{classA}
\A_{\alpha,\beta}:=\{F \in \Hol(\D,\C): \frac{F(z)}{z}\prec\psi\}.
\end{equation}
The subordination relation $ \frac{F(z)}{z}\prec \psi(z)$ means that $F \in \A_{\alpha,\beta}$ if there exists a function $\omega \in \Omega$ such that
\begin{equation*}%\label{subord}
F(z)=z\psi(\omega(z))\quad \text{ for all } z\in \D.
\end{equation*}
Clearly, every $F \in \A_{\alpha,\beta}$ is locally univalent at the origin and the inverse function $F^{-1}$ satisfies $F^{-1}(0)=0$. Let also denote
\begin{equation}\label{classB}
\BB_{\alpha,\beta}:=\{F^{-1} : F\in \A_{\alpha,\beta}\}.
\end{equation}

\medskip

First we establish the radius of univalence for  the class $\BB_{\alpha,\beta}$ as well as covering and distortion results.

     \begin{theorem}\label{th_posi1}
For $\alpha, \beta \in\C$ with $\Re \alpha \overline{\beta} >0, $   denote $M=1-\Re\frac{\beta}{\alpha}\,$.
   Every function $G\in\BB_{\alpha,\beta}$ is univalent  in the disk $D_R$, where
 \[
  R= \left\{ \begin{array}{lc}
                |\alpha|\left(\frac{1}{2}-M\right) , & \  \mbox{if } \Re \frac{\beta}{\alpha}>\frac34\,, \vspace{2mm} \\
                  |\alpha|\left(1-\sqrt{M}\right)^2  \,, &  \ \mbox{if }   \Re \frac{\beta}{\alpha} \le \frac34\,,
                \end{array}
 \right.% \vspace{2mm}
 \]
and satisfies $G(D_{R}) \subset D_{ R_1}$ with
\[
 R_1= \left\{ \begin{array}{ccc}
                 1, & \quad \mbox{if } \Re \frac{\beta}{\alpha}>\frac34\,, \vspace{2mm}\\
                  \frac{1}{\sqrt{M}}-1 , &  \quad\mbox{if } \Re \frac{\beta}{\alpha}\le\frac34\, \vspace{2mm}.
                \end{array}
 \right.
 \]
 Further, $G(D_{R}) \supset D_{ R_2},$ where $R_2=\displaystyle\frac{RR_1}{R_1|\beta| + \sqrt{R_1^2|\beta|^2-R^2}}\,.$
  \end{theorem}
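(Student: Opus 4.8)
The plan is to transfer every assertion about $G\in\BB_{\alpha,\beta}$ to the direct map $F=G^{-1}\in\A_{\alpha,\beta}$ and to use the subordination $F(z)/z\prec\psi$ as the only source of estimates. The first step is the remark that $\psi$ is a Möbius transformation, so it carries each disk $D_\rho$ onto a disk $K_\rho$ with centre $\beta+\alpha\frac{\rho^2}{1-\rho^2}$ and radius $\frac{|\alpha|\rho}{1-\rho^2}$. Since $F(z)=z\psi(\omega(z))$ with $\omega\in\Omega$ and $\omega(D_\rho)\subset D_\rho$, for $|z|=\rho$ we obtain the sharp inclusion $F(z)/z\in\overline{K_\rho}$. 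Estimating $|\cdot|$ from below by the real part (which discards $\Im\frac\beta\alpha$ and explains why the final radii depend only on $M$ and $|\alpha|$) yields two-sided bounds $m(\rho)\le|F(z)/z|\le\mathcal M(\rho)$ on $|z|=\rho$. Combining the lower bound with the argument principle (Rouché) gives the covering $F(D_\rho)\supset D_{\rho\,m(\rho)}$.

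The core is the radius of univalence. I would write $\frac{zF'(z)}{F(z)}=1+\frac{\omega\psi'(\omega)}{\psi(\omega)}\cdot\frac{z\omega'(z)}{\omega(z)}$ and bound the last factor by the Schwarz--Pick inequality, so that the radius of starlikeness of $F$ is controlled by the first zero of $F'$. A direct computation with the extremal choice $\omega=\Id$, i.e. $F_0(z)=z\psi(z)$, shows that this critical point crosses $\partial\D$ precisely when $\Re\frac\beta\alpha=\frac34$. For $\Re\frac\beta\alpha>\frac34$ the map $F$ is univalent on all of $\D$; the nearest singular value of $F^{-1}$ then lies on the curve $F(\partial\D)$, whose distance to the origin equals $|\alpha|(\frac12-M)$, so $G$ is univalent on $D_R$ with $R=|\alpha|(\frac12-M)$ and the crude inclusion $G(D_R)\subset\D$ gives $R_1=1$. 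For $\Re\frac\beta\alpha\le\frac34$ the critical point lies at radius $\frac1{\sqrt M}-1$ and the associated critical value has modulus $|\alpha|(1-\sqrt M)^2$; hence $G$ is univalent (indeed starlike) on $D_R$ with $R=|\alpha|(1-\sqrt M)^2$, while the same critical radius bounds the image, $G(D_R)\subset D_{R_1}$ with $R_1=\frac1{\sqrt M}-1$. As a consistency check, $\rho\,m(\rho)$ evaluated at $\rho=R_1$ reproduces the value of $R$ in each case.

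For the covering of $G$ I would normalise $\Phi(\zeta)=\frac1{R_1}G(R\zeta)$, a univalent self-map of $\D$ fixing the origin with $|\Phi'(0)|=\frac{R}{R_1|\beta|}=:a$. The classical covering theorem for univalent self-maps of the disk states that $\Phi(\D)$ contains the hyperbolic disk about $0$ of radius $\frac12\arctanh a$, that is $\Phi(\D)\supset D_\delta$ with $\arctanh\delta=\frac12\arctanh a$, whence $\delta=\frac{a}{1+\sqrt{1-a^2}}$. Rescaling, $R_2=R_1\delta=\frac{RR_1}{R_1|\beta|+\sqrt{R_1^2|\beta|^2-R^2}}$, exactly as claimed.

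I expect the univalence step to be the main obstacle. Two points need care. First, the Schwarz--Pick estimate for $\frac{z\omega'(z)}{\omega(z)}$ must be combined with the explicit logarithmic derivative $\frac{\zeta\psi'(\zeta)}{\psi(\zeta)}$ and optimised over all admissible $\omega$; one has to confirm that the real configuration $\Im\frac\beta\alpha=0$ is genuinely extremal, which is what makes the formulas for $R$ and $R_1$ depend on $M$ alone. Second, one must rule out any loss of injectivity before the first critical point (a two-point collision), so that the distance to the nearest critical value really is the univalence radius. By contrast, the distortion and covering statements are routine once the image disk $\overline{K_\rho}$ is available.
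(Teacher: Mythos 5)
Your covering step for $G$ rests on a false lemma. The inequality $\Phi(\D)\supset D_\delta$ with $\delta=\frac{a}{1+\sqrt{1-a^2}}$, $a=|\Phi'(0)|$, is \emph{not} a classical covering theorem for univalent self-maps of the disk. Take the radial slit map $\Phi=k^{-1}\circ(a\,k)$, where $k(z)=z/(1-z)^2$ is the Koebe function: it is univalent, maps $\D$ into $\D$, fixes $0$ with $\Phi'(0)=a$, and omits the slit $[-1,-r_a]$ with $r_a=\frac{1-\sqrt{1-a}}{1+\sqrt{1-a}}$, which is strictly smaller than $\frac{a}{1+\sqrt{1-a^2}}$ (at $a=\frac12$: $0.171\ldots$ versus $0.267\ldots$). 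The constant $\frac{a}{1+\sqrt{1-a^2}}=\tanh\left(\frac12\arctanh a\right)$ is the Mej\'{\i}a--Minda covering constant for \emph{hyperbolically convex} self-maps, and that is exactly how the paper proceeds: it first records, from Corollary~3.2 of the cited Elin--Shoikhet ``sharpened inverse function theorem,'' that $G$ maps $D_R$ onto a hyperbolically convex subdomain of $D_{R_1}$, and only then applies Mej\'{\i}a--Minda to $h(z)=G(Rz)/R_1$. Without hyperbolic convexity of $G(D_R)$ --- which you never establish --- univalence alone yields only the smaller radius $R_1\cdot\frac{1-\sqrt{1-a}}{1+\sqrt{1-a}}$, not the claimed $R_2$.

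The univalence part also has a step that fails. Your assertion that $F$ is univalent on all of $\D$ when $\Re\frac{\beta}{\alpha}>\frac34$ is false: $F(z)=z/(1-z^3)$ lies in $\A_{1,1}$ (so $\Re\frac\beta\alpha=1$), yet $F'$ vanishes at $|z|=2^{-1/3}<1$, so $F$ is not even locally univalent on $\D$. What must be shown is that the branch of $F^{-1}$ at the origin continues univalently to $D_R$; reducing this to ``distance to the nearest critical value'' requires a genuine monodromy argument (for instance, that the component $V$ of $F^{-1}(D_R)$ containing $0$ is compactly contained in $\D$ and free of critical points, so that $F|_V:V\to D_R$ is a proper covering of a simply connected domain, hence a bijection). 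You flag both open issues yourself --- extremality of $\omega=\Id$ and possible two-point collisions --- but do not resolve them; these are precisely the content of Theorem~3.1 of the cited reference, which the paper simply invokes. Your computation of the critical point and critical value of $F_0(z)=z\psi(z)$ does correctly reproduce $R_1=\frac1{\sqrt M}-1$ and $R=|\alpha|(1-\sqrt M)^2$ in the case $\Re\frac\beta\alpha\le\frac34$, which is a useful consistency check, but it is not a proof for general $\omega\in\Omega$ or for complex $\beta/\alpha$.
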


 \begin{proof} Let us represent the number $\frac{\alpha}{2\beta}$ in the form $se^{-i\theta}$, or $\alpha=2\beta se^{-i\theta}$, where $|\theta|<\frac\pi2$ since ${\Re\alpha\overline{\beta}>0}$. Then by \eqref{ineq}, we  get  $ \Re e^{i\theta}\frac{F(z)}{\beta z} >\cos\theta-s.$ Therefore the embedding  $ G(D_{ R}) \subset D_{ R_1}$ follows from \cite[Corollary 3.2]{E-S-2020a}. Moreover, it follows from the proof of Theorem~3.1 in \cite{E-S-2020a} that $G$ is a univalent function.

 It remains to prove the covering relation $D_{R_2} \subset G(D_{R})$ only.
 It follows from Corollary 3.2 in  \cite{E-S-2020a} that $G$ maps $D_R$ onto a hyperbolically convex subdomain of the disk $D_{R_1}$. Therefore, the function $h$ defined by $h(z)=\displaystyle\frac{G(Rz)}{R_1}\ $ belongs to $\Hol(\D)$ and is hyperbolically convex. By a result in \cite{Me-Mi-91} (see also \cite[Theorem~2]{Ma-Mi-94}), the image $h(\D)$ contains the disk of radius $\displaystyle\frac{|h'(0)|}{1+\sqrt{1-|h'(0)|^2}}\,$. Since $h'(0)=\frac{G'(0)R}{R_1}=\frac{R}{\beta R_1}\,,$ one concludes that $G(D_R)$ contains the disk of radius
 \[
    \frac{|h'(0)|R_1}{1+\sqrt{1-|h'(0)|^2}} = \frac{RR_1}{R_1|\beta| + \sqrt{R_1^2|\beta|^2-R^2}} = R_2.
 \]
 The proof is complete.
 \end{proof}

 \begin{remar}
  In the proof of this theorem, the hyperbolic convexity was used for one purpose only, namely, to prove the covering result. It is worth mentioning that some covering result can be obtained without this property.
 Indeed, let $w\not\in G(D_{R}) $. Then the function $h$ defined by $h(z)=\displaystyle\frac{G(Rz)}{w-G(Rz)}$ is holomorphic and univalent in the unit disk. It can be easily seen that
   \[
   h(0)=0,\quad h'(0)=\frac{G'(0)R}w\quad\mbox{and}\quad h''(0)=\frac{G''(0)w + 2G'(0)^2}{ w^2}R^2.
   \]
   Since by the famous Bieberbach theorem $|h''(0)|\le 4|h'(0)|$ (see, for example, \cite{G-K}) we have
   \[
   4|G'(0)w|\ge R\left| G''(0)w+2G'(0)^2\right| \ge 2R|G'(0)|^2-R|G''(0)w|.
   \]
     It follows from \cite[Proposition~4.1]{MF-new} that $G'(0)=\frac1\beta$ and $|G''(0)|\le \frac{|\alpha|}{|\beta|^3}$. This leads to $|w|\ge \displaystyle\frac{|\beta|R}{|\beta|^2 +|\Re \beta|R}$.
 \end{remar}

\begin{examp}\label{ex-starlike}
  Consider the set of all functions $F\in\Hol(\D,\C)$ such that $F(0)=F'(0)-1=0$ and $\Re\frac{F(z)}{z}\ge\frac12\,.$ This is equivalent to  $F\in \A_{1,1}$, that is, to the choice  $\alpha=\beta=1\,.$   Theorem~\ref{th_posi1} implies that every $G\in\BB_{1,1}$ is univalent in the disk of radius $\frac{1}{2}$ and  $D_{\frac{1}{2+\sqrt{3}}}\subset G(D_{\frac{1}{2}})\subset\D.$
\end{examp}

\medskip

\section{Main results}\label{sec-main}
\setcounter{equation}{0}

In this section we first apply the conclusion of Theorem~\ref{th_posi1} to  the special case where $\Re\beta>1$ and $\alpha =2(\Re\beta-1)$.  In this case the function $\psi(z)=(\Re\beta-1)\frac{2z}{1-z}+\beta$ maps the open unit disk onto the half-plane ${\{w:\,\Re w>1\}}$.
Holding in mind our interest in resolvents, we intend to study the net of such functions $\{\psi_r\}_{r>0}$ choosing $\beta$ to be an affine function of the positive parameter~$r$. Namely, from now on $q\in\C$ will be a fixed number with $\Re q>0$ and
\begin{equation}\label{psi-r}
\psi_r(z)=1+r\,\frac{q+\overline{q}z}{1-z}=1+rq+2r\Re q\sum_{n=1}^\infty z^n.
\end{equation}
Accordingly $\A_r:=\{F \in \Hol(\D,\C): \frac{F(z)}{z}\prec\psi_r\}$, cf. \eqref{classA}.
Next we formulate criteria for a holomorphic function $F$ to belong to the class~$\A_r$.
\begin{lemma}
  Let $F\in\Hol(\D,\C),\ F(0)=0.$ Then the following conditions are equivalent:
  \begin{itemize}
  \item [(i)] $\Re \frac{F(z)}{z}\ge1$ for all $z\in\D$ and $F'(0)=1+rq$;
  \item [(ii)] $F(z)=z+rz\frac{q+\overline{q}\omega(z)}{1-\omega(z)}$ for some $\omega\in\Omega$;
  \item [(iii)] $F\in \A_r$.
\end{itemize}
\end{lemma}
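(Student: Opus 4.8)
The plan is to verify the two equivalences $(ii)\Leftrightarrow(iii)$ and $(i)\Leftrightarrow(iii)$ separately, the first being a direct unwinding of the definition of subordination and the second carrying the actual content. For $(ii)\Leftrightarrow(iii)$: by the definition of $\A_r$ recorded after \eqref{psi-r} together with \eqref{classA}, membership $F\in\A_r$ means exactly that $\frac{F(z)}{z}=\psi_r(\omega(z))$ for some $\omega\in\Omega$. Substituting the explicit form \eqref{psi-r} of $\psi_r$ and multiplying through by $z$ rewrites this identity as $F(z)=z+rz\,\frac{q+\overline q\,\omega(z)}{1-\omega(z)}$, which is precisely (ii). Hence (ii) and (iii) say the same thing.

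For $(i)\Leftrightarrow(iii)$ I would set $g(z):=\frac{F(z)}{z}$, which is holomorphic on $\D$ since $F(0)=0$ and satisfies $g(0)=F'(0)$. The geometric input is the fact, noted just before the statement, that $\psi_r$ maps $\D$ conformally onto the half-plane $H=\{w:\Re w>1\}$ with $\psi_r(0)=1+rq$. Because $\psi_r$ is univalent, the standard subordination principle for univalent majorants (see, e.g., \cite{G-K}) gives: $g\prec\psi_r$ if and only if $g(0)=\psi_r(0)$ and $g(\D)\subset\psi_r(\D)=H$. Granting this, $(iii)\Rightarrow(i)$ is immediate, since $g\prec\psi_r$ forces $g(0)=1+rq$, i.e. $F'(0)=1+rq$, and $g(\D)\subset H$ gives $\Re\frac{F(z)}{z}>1$.

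The reverse implication $(i)\Rightarrow(iii)$ is the one delicate point, and I expect it to be the main obstacle: condition (i) supplies only the non-strict bound $\Re g\ge1$, i.e. $g(\D)\subset\overline H$, whereas the subordination principle needs the strict inclusion $g(\D)\subset H$. I would close this gap by a dichotomy. If $g$ is constant, then $g\equiv g(0)=1+rq$ and $\Re g\equiv 1+r\Re q>1$ because $\Re q>0$, so $g(\D)\subset H$. If $g$ is nonconstant, then $g(\D)$ is open by the open mapping theorem, and an open subset of $\overline H$ cannot meet the boundary line $\{\Re w=1\}$; hence $g(\D)\subset H$. (Alternatively one may apply the minimum principle to the harmonic function $\Re g$: a value $\Re g=1$ at an interior point would be an interior minimum, forcing $\Re g\equiv1$ and contradicting $\Re g(0)=1+r\Re q>1$.) Combined with $g(0)=F'(0)=1+rq=\psi_r(0)$, the subordination principle now yields $g\prec\psi_r$, that is $F\in\A_r$, which completes the argument.
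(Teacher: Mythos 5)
Your proof is correct and is exactly the direct verification the paper has in mind: the authors give no argument at all, stating only that ``this lemma follows directly from our notations,'' and your unwinding of the subordination definition for (ii)$\Leftrightarrow$(iii) together with the subordination principle for the univalent half-plane map $\psi_r$ for (i)$\Leftrightarrow$(iii) is the standard way to make that precise. The one place you supply content genuinely beyond notation --- upgrading the non-strict bound $\Re\frac{F(z)}{z}\ge1$ in (i) to the strict inclusion $g(\D)\subset\{\Re w>1\}$ via the open mapping theorem (or the minimum principle for $\Re g$) --- is handled correctly and is the only delicate point in the statement.
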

This lemma follows directly from our notations. Each  condition of this lemma is equivalent to the fact that the function $f$ defined by
\begin{equation} \label{G*-1}
 f(z)=\frac{F(z)-z}r=z\frac{q+\overline{q}\omega(z)}{1-\omega(z)}\,,\  \omega\in\Omega,
\end{equation}
is a generator on $\D$ by the Berkson--Porta formula. Hence the (right) inverse function $F^{-1}=:G(=G_r)$, which, in fact, solves the functional equation
\begin{equation} \label{G*-2}
G_r+ rf\circ G_r=\Id
\end{equation}
is holomorphic in the open unit disk $\D$ by Theorem~\ref{teorA}. (Recall that $G_r$ is a univalent self-mapping of $\D$, which is called   the resolvent of $f$, see Section~\ref{sect-intro}.)

In what follows, we focus on the family $\JJ_r:=\BB_{2r\Re q,1+rq},$ see \eqref{classB}, consisting of the resolvents $G_r=(\Id + rf)^{-1}$ with a fixed $q$.

\vspace{2mm}

 As we have already mentioned, the resolvent family converges to  the null point of $f$  as $r \to \infty$, uniformly on compact subset of the unit disk. We now show that for $r>\frac{2}{\Re q}$, nonlinear resolvents admit  forced analytic continuation to a disk of prescribed radius, and prove distortion and covering results. This enables us to establish that $G_r$ tends to $0$ as $r\to\infty$ uniformly on $\D$% as well as to  find the exponential squeezing coefficient
.

 \begin{theorem}\label{th_resol1}
Let $r>\frac2{\Re q}\,.$
\begin{itemize}
  \item [(a)] Every element $G_r$ of $\JJ_r$ can be extended as a univalent function to the disk $D_{\rho(r)},\  \rho(r)={\displaystyle \left(\sqrt{2r \Re q}-\sqrt{r \Re q -1}\right)^2>1}$, and satisfies $G_r(D_{ \rho(r) }) \subset D_{ \rho_1(r)}$ with $\rho_1(r)= \displaystyle\sqrt{\frac{2r\Re q}{r\Re q-1}}  -1.$
  \item [(b)] $G_r(D_{\rho(r)})\supset D_{\rho_2(r)}$, where $\rho_2(r)=\displaystyle\frac{\left(\sqrt{2r \Re q}-\sqrt{r \Re q -1}\right)^2} {|1+rq| + \sqrt{2+r\Re q+r^2|q|^2}}\,.$
  \item [(c)] For any $r>0$ we have $G_r(\D)\supset D_{\rho_3(r)},$ where\\ $\displaystyle \rho_3(r)=\frac1{|1+rq| + \sqrt{|1+rq|^2 -1}}\,$.
\end{itemize}
 \end{theorem}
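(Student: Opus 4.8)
The plan is to read off parts (a) and (b) as a direct specialization of Theorem~\ref{th_posi1}, and to obtain part (c) by a separate, softer argument valid on all of $\D$. Recall that $\JJ_r=\BB_{\alpha,\beta}$ with $\alpha=2r\Re q$ (which is real and positive) and $\beta=1+rq$. First I would record the two quantities that drive Theorem~\ref{th_posi1}: since $\alpha$ is real, $\Re\frac{\beta}{\alpha}=\frac{1+r\Re q}{2r\Re q}$, whence $M=1-\Re\frac{\beta}{\alpha}=\frac{r\Re q-1}{2r\Re q}$, and $|\beta|^2=(1+r\Re q)^2+(r\Im q)^2=1+2r\Re q+r^2|q|^2$. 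A one-line computation shows that $\Re\frac{\beta}{\alpha}\le\frac34$ is equivalent to $r\Re q\ge2$, so the hypothesis $r>\frac{2}{\Re q}$ places us (strictly) in the second branch of Theorem~\ref{th_posi1}.

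For part (a) I would substitute these values into $R=|\alpha|(1-\sqrt M)^2$ and $R_1=\frac{1}{\sqrt M}-1$. Writing $a:=r\Re q$ and $1-\sqrt M=\frac{\sqrt{2a}-\sqrt{a-1}}{\sqrt{2a}}$, the factor $|\alpha|=2a$ cancels the denominator and yields $R=(\sqrt{2a}-\sqrt{a-1})^2=\rho(r)$; similarly $R_1=\sqrt{\frac{2a}{a-1}}-1=\rho_1(r)$. The univalence of $G_r$ on $D_{\rho(r)}$ and the inclusion $G_r(D_{\rho(r)})\subset D_{\rho_1(r)}$ are then exactly the conclusions of Theorem~\ref{th_posi1}, and since $\rho(r)>1$ this furnishes the claimed continuation past the unit circle. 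To verify $\rho(r)>1$, I would note that $g(a):=\sqrt{2a}-\sqrt{a-1}$ equals $1$ at $a=2$ and has derivative $\frac{1}{\sqrt{2a}}-\frac{1}{2\sqrt{a-1}}$, which is positive precisely when $a>2$; hence $g(a)>1$ and $\rho(r)=g(a)^2>1$ for all $a>2$.

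For part (b) I would feed $R=\rho(r)$, $R_1=\rho_1(r)$, $|\beta|=|1+rq|$ into the covering radius $R_2=\frac{RR_1}{R_1|\beta|+\sqrt{R_1^2|\beta|^2-R^2}}$ of Theorem~\ref{th_posi1}. Factoring $R_1$ out of the radical turns this into $R_2=\frac{R}{|\beta|+\sqrt{|\beta|^2-(R/R_1)^2}}$, and the identity $R_1=\frac{\sqrt{2a}-\sqrt{a-1}}{\sqrt{a-1}}$ gives the clean relation $(R/R_1)^2=(a-1)\,\rho(r)$. Bounding $\rho(r)\ge1$ inside the radical (which enlarges the denominator and hence only weakens the estimate) and using $|\beta|^2-(a-1)=2+r\Re q+r^2|q|^2$, one arrives at the stated lower bound $\rho_2(r)=\frac{\rho(r)}{|1+rq|+\sqrt{2+r\Re q+r^2|q|^2}}$ for the radius of a disk covered by $G_r(D_{\rho(r)})$. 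This bookkeeping is the place I expect the main work to sit: the exact covering radius of Theorem~\ref{th_posi1} is not itself a tidy closed form, and the point is to isolate $(R/R_1)^2$ and recognise that after the substitution $\rho(r)\ge1$ the radicand collapses to $2+r\Re q+r^2|q|^2$.

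Part (c) is proved independently of the case distinction and holds for every $r>0$. Here I would invoke the fact recalled in Section~\ref{sect-intro} that each resolvent $G_r$ is a hyperbolically convex self-mapping of $\D$ with $G_r(0)=0$. Applying the covering theorem for hyperbolically convex functions \cite[Theorem~2]{Ma-Mi-94} directly to $G_r$, this time with no rescaling, shows that $G_r(\D)$ contains the disk of radius $\frac{|G_r'(0)|}{1+\sqrt{1-|G_r'(0)|^2}}$. Since $G_r=F^{-1}$ with $F'(0)=1+rq$, one has $G_r'(0)=\frac{1}{1+rq}$, and because $|1+rq|\ge1+r\Re q>1$ the radicand is positive; substituting $|G_r'(0)|=\frac{1}{|1+rq|}$ gives exactly $\rho_3(r)$. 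The remaining steps in (a) and (c), together with the single monotonicity check for $\rho(r)>1$, are routine.
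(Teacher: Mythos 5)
Your proposal is correct and follows essentially the same route as the paper: parts (a) and (b) by specializing Theorem~\ref{th_posi1} with $\alpha=2r\Re q$, $\beta=1+rq$ (so $M=\frac{r\Re q-1}{2r\Re q}$), and part (c) by applying the Mej\'{i}a--Minda covering theorem for hyperbolically convex functions directly to $G_r$ with $G_r'(0)=\frac{1}{1+rq}$. The only added value is in (b), where you make explicit what the paper leaves as ``follows from Theorem~\ref{th_posi1} as well'': the stated $\rho_2(r)$ is a slight weakening of the exact covering radius $R_2$, obtained by bounding $(R/R_1)^2=(r\Re q-1)\rho(r)\ge r\Re q-1$ inside the radical, and your verification that this only shrinks the covered disk is the right justification.
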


\begin{corol}\label{corr-uiform-conv}
 For any $r>\frac2{\Re q}$  we have
 $$|G_r(z)|\le\displaystyle\frac{3}{1+r\Re q}\quad \text{ for all } \quad z\in\D.$$
Thus  $G_r$ has no boundary fixed points and the net $\{G_r\}_{r>0}$ converges to zero uniformly on $\D$ as $r\to\infty$.
\end{corol}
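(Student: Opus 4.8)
The plan is to bound $|G_r(z)|$ directly from the defining functional equation \eqref{G*-2}, rather than from the image radius $\rho_1(r)$ of Theorem~\ref{th_resol1}(a). This distinction is the crux: part~(a) controls $G_r$ only on the larger disk $D_{\rho(r)}\supset\D$, and one checks that $\rho_1(r)$ actually exceeds $\frac{3}{1+r\Re q}$ once $r\Re q$ is large (e.g. $\rho_1\to\sqrt2-1$ while $\frac{3}{1+r\Re q}\to0$), so the corollary cannot follow from the image estimate alone. Instead I fix $z\in\D$ and set $w:=G_r(z)\in\D$. By \eqref{G*-2} and \eqref{G*-1}, $z=w+rf(w)=w\bigl(1+rp(w)\bigr)$ with $p(w)=\frac{q+\overline q\,\omega(w)}{1-\omega(w)}$ for the fixed $\omega\in\Omega$ attached to $f$. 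A short computation gives the identity $\Re p(w)=\Re q\,\frac{1-|\omega(w)|^2}{|1-\omega(w)|^2}$, and since $|\omega(w)|\le|w|$ by the Schwarz lemma, $|1-\omega(w)|\le 1+|\omega(w)|$, and $t\mapsto\frac{1-t}{1+t}$ is decreasing, this is at least $\Re q\,\frac{1-|w|}{1+|w|}$.

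Consequently $\bigl|\frac{z}{w}\bigr|=|1+rp(w)|\ge\Re\bigl(1+rp(w)\bigr)\ge 1+r\Re q\,\frac{1-|w|}{1+|w|}$. Writing $\sigma=|w|$ and using $|z|<1$ I get $\sigma\bigl(1+r\Re q\,\frac{1-\sigma}{1+\sigma}\bigr)<1$. Clearing the positive denominator $1+\sigma$ turns this into $(r\Re q-1)\sigma^2-r\Re q\,\sigma+1>0$, a quadratic whose discriminant is the perfect square $(r\Re q-2)^2$, so its roots are exactly $\sigma=1$ and $\sigma=\frac{1}{r\Re q-1}$. Because $r\Re q>2$ forces $\frac{1}{r\Re q-1}<1$, the admissible range $\sigma<1$ selects $\sigma<\frac{1}{r\Re q-1}$, i.e. $|G_r(z)|<\frac{1}{r\Re q-1}$. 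Finally $\frac{1}{r\Re q-1}\le\frac{3}{1+r\Re q}$ holds for every $r\Re q\ge2$ (with equality at the endpoint $r\Re q=2$), which both yields the stated estimate and explains why the constant $3$ appears.

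The two concluding assertions are then immediate. For $r>\frac{2}{\Re q}$ one has $\frac{3}{1+r\Re q}<1$, so $G_r(\D)$ lies in a disk compactly contained in $\D$; in particular no angular limit of $G_r$ can have modulus $1$, whence $G_r$ has no boundary fixed point. Letting $r\to\infty$, the uniform bound $\sup_{z\in\D}|G_r(z)|\le\frac{3}{1+r\Re q}\to0$ gives convergence of the net $\{G_r\}$ to $0$ uniformly on all of $\D$. The only genuinely delicate point is the opening move: one must resist invoking Theorem~\ref{th_resol1}(a) and instead extract the sharper self-map bound from the Berkson--Porta form of $f$, after which the argument is just the Schwarz lemma together with an elementary quadratic inequality.
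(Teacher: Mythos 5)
Your proof is correct, but it follows a genuinely different route from the paper's. The paper \emph{does} obtain the corollary from Theorem~\ref{th_resol1}(a): not from the image radius $\rho_1(r)$ alone, but from the Schwarz lemma applied to $h(z)=G_r(\rho(r)z)/\rho_1(r)$ on the enlarged disk $D_{\rho(r)}$, which gives $|G_r(\zeta)|\le \rho_1(r)|\zeta|/\rho(r)\le \rho_1(r)/\rho(r)$ for $\zeta\in\D$, and then checks $\rho_1(r)/\rho(r)\le 3/(1+r\Re q)$. So your opening claim that the corollary ``cannot follow from the image estimate'' overlooks the gain of the factor $1/\rho(r)$ coming from the forced analytic continuation; it is a mistaken remark about the paper, though it does not affect your own argument. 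What you do instead is bypass Theorem~\ref{th_resol1} entirely: from the functional equation $z=w\bigl(1+rp(w)\bigr)$ with $w=G_r(z)$, the identity $\Re p(w)=\Re q\,\frac{1-|\omega(w)|^2}{|1-\omega(w)|^2}$ and the Schwarz lemma for $\omega$ yield the Harnack-type bound $\Re p(w)\ge \Re q\,\frac{1-|w|}{1+|w|}$, and the resulting quadratic inequality $(r\Re q-1)\sigma^2-r\Re q\,\sigma+1>0$ with $\sigma=|w|<1$ forces $|G_r(z)|<\frac{1}{r\Re q-1}$. All steps check out (the discriminant computation, the root selection under $r\Re q>2$, and the comparison $\frac{1}{r\Re q-1}\le\frac{3}{1+r\Re q}$ for $r\Re q\ge2$), and the concluding assertions follow as you say. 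Your approach is more elementary and self-contained, and it actually produces a \emph{sharper} constant: $\frac{1}{r\Re q-1}$ is strictly smaller than both $\frac{3}{1+r\Re q}$ and the paper's $\rho_1(r)/\rho(r)$ once $r\Re q>2$. What it does not buy is the additional information packaged in Theorem~\ref{th_resol1} (univalent continuation to $D_{\rho(r)}$ and the covering disks), which the paper needs elsewhere anyway, so for the paper the corollary comes essentially for free from machinery already in place.
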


We prove Theorem~\ref{th_resol1} and Corollary~\ref{corr-uiform-conv} simultaneously.

\begin{proof}
  It follows from \eqref{psi-r}, $\beta= 1+rq$ and $\alpha= 2r\Re q.$  The condition $r>\frac{2}{\Re q}$ is equivalent to $\Re\frac\beta\alpha<\frac34\,$. Following our notation $M=1-\Re\frac{\beta}{\alpha}=\frac{r\Re q-1}{2r\Re q}>\frac14\,.$ We now substitute these expressions in Theorem~\ref{th_posi1} and then get the univalence of $G_r$ in $D_{\rho(r)}$ as well as the inclusion   $G_r(D_{ \rho(r) }) \subset D_{ \rho_1(r)}$.

 In fact, this inclusion implies Corollary~\ref{corr-uiform-conv}. Indeed, consider the function $h$ defined by $h(z):=\frac{G_r(\rho(r)z)}{\rho_1(r)}\,$. Assertion (a) implies that $h$ is a self-mapping of the open unit disk and hence by the Schwarz lemma $|h(z)|\le|z|$, or equivalently, $|G_r(\rho(r)z)|\le \rho_1(r)|z|$ for all $z\in\D$. Denote $\zeta=\rho(r)z$. Then $|G_r(\zeta)|\le \frac{\rho_1(r)|\zeta|}{\rho(r)}$ for $\zeta\in D_{\rho(r)}.$ Since $\rho(r)>1$, one can take, in particular, $\zeta\in\D$ and this yields
\[
|G_r(\zeta)|\le \frac{\rho_2(r)}{\rho(r)} = \frac{1}{(r\Re q-1)\left( \sqrt{\frac{2r\Re q}{r\Re q-1}} -1 \right)} \le\frac{3}{1+r\Re q}\,.
\]

As to the covering result $G_r(D_{\rho(r)})\supset D_{\rho_2(r)}$, it follows from Theorem~\ref{th_posi1} as well.

  To complete the proof, we recall that according to a result in \cite{E-S-S} (see also \cite{E-S-2020a}) every resolvent is a hyperbolically convex function. In addition, it follows from \cite {Me-Mi-91} (see also \cite[Theorem 2]{Ma-Mi-94}) that the image of the unit disk under every hyperbolically convex function $h$ normalized by $h(0)=0,\ h'(0)=\delta\in(0,1),$ contains the disk of radius $\frac\delta{1+\sqrt{1-\delta^2}}$. Since $G_r'(0)=\frac1{1+rq}\,$, we obtain assertion (c).  This completes the proof.
\end{proof}

It was proved in~\cite{E-S-S} that for every $r>0$ the resolvent $G_r$ is a starlike function of order $\frac{1}{2}$\,. In the next theorem we describe the range of the function $\frac{wG'_r(w)}{G_r(w)}$, which provides an essentially stronger result. More precisely, we get order of starlikeness and order of strong starlikeness (see Definition~\ref{def-starlike}) of the resolvent $G_r$ as functions depending on the resolvent parameter $r$.

To this end we address to the function
 \begin{equation}\label{Ar}
 A(r):=\displaystyle\frac{6r (1+r )}{(1+r)^3- 3(5r-1) }
 \end{equation}
 and to the largest root $r_0$ of the equation $A(r)=1.$ It can be calculated that $r_0 = 1+2\sqrt{7}\cos\left(\frac{1}{3}\arctan\frac{3\sqrt{31}}{8}\right) \approx 5.92434\ldots$.

\begin{theorem}\label{th-r-star}
Let $G_r\in \JJ_r$, where $r\Re q> r_0$. Then for all $w\in\D,$
\begin{equation*}%\label{OrstarGr}
\left|\frac{wG'_r(w)}{G_r(w)} - \frac{1}{1-A^2(r\Re q)}\right| \le \frac{A(r\Re q)}{1-A^2(r\Re q)}\,.
\end{equation*}
\end{theorem}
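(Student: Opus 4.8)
The plan is to turn the stated disk inclusion into a single modulus estimate and then read it off from the Berkson--Porta data of $f$. Put $\Theta(w):=\frac{wG_r'(w)}{G_r(w)}$ and $A:=A(r\Re q)$. The disk $\{\Theta:\ |\Theta-\frac1{1-A^2}|\le\frac A{1-A^2}\}$ has real diameter $[\frac1{1+A},\frac1{1-A}]$, so it is exactly the image of $\{|v-1|\le A\}$ under $v\mapsto 1/v$; consequently, as long as $A<1$, the assertion is equivalent to $|\frac1{\Theta(w)}-1|\le A$ for every $w\in\D$. Since $G_r=F^{-1}$ with $F=\Id+rf$, writing $z=G_r(w)$ gives $\frac1{\Theta(w)}=\frac{zF'(z)}{F(z)}$, and with $P(z):=F(z)/z=\psi_r(\omega(z))$ one has $\frac{zF'(z)}{F(z)}-1=\frac{zP'(z)}{P(z)}=:\Phi(z)$. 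Thus the theorem reduces to proving $|\Phi(z)|\le A(r\Re q)$ for all $z\in G_r(\D)$. The hypothesis $r\Re q>r_0$ enters exactly here: $r_0$ is the largest root of $A=1$, so it secures $A<1$, which both legitimizes the reduction and keeps $1-A^2>0$ in the stated centre and radius.

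Next I would make $\Phi$ explicit and strip it down to a boundary optimization. From $\psi_r'(\zeta)=\frac{2r\Re q}{(1-\zeta)^2}$ and the chain rule,
\[
\Phi(z)=\frac{2r\Re q\;z\,\omega'(z)}{\bigl(1-\omega(z)\bigr)\bigl[\,1+rq-(1-r\overline q)\,\omega(z)\,\bigr]},\qquad \omega\in\Omega.
\]
As $\Phi$ is holomorphic on $G_r(\D)$ and this domain is cut out by $|F|<1$, the maximum principle confines the supremum of $|\Phi|$ to the boundary curve $\{z:\ |F(z)|=1\}$. On that curve I would set $\zeta:=\omega(z)$, $t:=|z|$, and invoke Schwarz--Pick, $|\omega'(z)|\le\frac{1-|\zeta|^2}{1-t^2}$, together with the boundary identity $t\,|\psi_r(\zeta)|=1$ relating $t$ to $\zeta$. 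These two facts collapse the estimate into a finite-dimensional extremal problem in $\zeta$ (and, since the claimed bound depends only on $r\Re q$, in the still-free imaginary part of $q$). Equality in Schwarz--Pick forces the extremal $\omega$ to be a rotation, i.e. the worst generator is $f(z)=z\frac{q+\overline q z}{1-z}$, up to rotation.

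Finally I would carry out the optimization. Eliminating $t$ through $t\,|\psi_r(\zeta)|=1$ reduces $|\Phi|$ on the boundary to a rational function of $\zeta$ and of $\Im q$, to be maximized over the admissible range; the maximum sits on the curve $|\zeta\,\psi_r(\zeta)|=1$, and setting the pertinent derivative to zero produces the cubic $(1+s)^3-3(5s-1)$ in $s=r\Re q$, which is precisely the denominator appearing in $A$. Tracking the extremal value through the algebra delivers the bound $A(r\Re q)$, and the theorem follows. I expect this last step to be the real obstacle: one must locate the extremal configuration among all Schwarz functions $\omega$ and all $q$ of prescribed real part $r\Re q$, confirm that the maximum is attained on the boundary curve rather than in the interior of $G_r(\D)$, and then push a fairly heavy computation to its closed form $A$ and to the threshold $r_0=1+2\sqrt7\cos\!\bigl(\tfrac13\arctan\tfrac{3\sqrt{31}}8\bigr)$. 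By contrast the Möbius reduction, the explicit formula for $\Phi$, and the Schwarz--Pick step are all routine.
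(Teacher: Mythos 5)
Your opening reduction is exactly the paper's: writing $z=G_r(w)$, $F=\Id+rf$, $P=F/z=1+rp$, the identity $\frac{wG_r'(w)}{G_r(w)}=\bigl(1+\frac{zP'(z)}{P(z)}\bigr)^{-1}$ holds, and the stated disk is indeed the image of $\{|v-1|\le A\}$ under $v\mapsto 1/v$, so everything hinges on the single estimate $\left|\frac{rzp'(z)}{1+rp(z)}\right|\le A(r\Re q)$ on $G_r(\D)$. The problem is that you never establish this estimate, and the route you sketch for it is not the one that actually produces the constant $A(r\Re q)$. In the paper the constant comes from the distortion result (Corollary~\ref{corr-uiform-conv}): $G_r(\D)\subset D_{3/(1+r\Re q)}$, so it suffices to bound $\left|\frac{rzp'(z)}{1+rp(z)}\right|$ on the \emph{round} disk of radius $\frac{3}{1+r\Re q}$. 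This is done via the Riesz--Herglotz representation of $p$: one compares the integrands of $C_r(z)=|rzp'(z)|$ and $B_r(z)=\Re(1+rp(z))$ kernel by kernel, and the ratio $A_r(z,\zeta)$ of the two kernels is maximized precisely at $|z|=\frac{3}{1+r\Re q}$, $z\bar\zeta=|z|$, where it equals $\frac{6s(1+s)}{(1+s)^3-3(5s-1)}=A(s)$ with $s=r\Re q$. The cubic in the denominator of $A$ is therefore obtained by \emph{substituting the distortion radius into the kernel ratio}, not by ``setting the pertinent derivative to zero'' on the boundary curve $|F(z)|=1$; your guess about where $(1+s)^3-3(5s-1)$ comes from is incorrect.

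Concretely, the gap is twofold. First, your plan never invokes the covering/distortion machinery ($|G_r(w)|\le\frac{3}{1+r\Re q}$), which is the indispensable input; without it there is no visible mechanism forcing the answer to be $A(r\Re q)$ rather than some other function of $r$ and $q$. Second, the extremal problem you set up (maximum principle on $G_r(\D)$, Schwarz--Pick for $\omega$, optimization over the curve $|z||\psi_r(\omega(z))|=1$ and over $\Im q$) is genuinely harder than the one the paper solves, couples $\omega(z)$ and $\omega'(z)$ in a way that is not resolved by ``the extremal $\omega$ is a rotation,'' and is left entirely uncarried out at the step you yourself identify as the real obstacle. Since that step is the entire content of the theorem beyond the routine M\"obius reduction, the proposal as written does not constitute a proof. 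To repair it along the paper's lines: first prove $G_r(\D)\subset D_{3/(1+r\Re q)}$ from Theorem~\ref{th_posi1}, then run the Herglotz-kernel comparison $C_r(z)\le A(r\Re q)\,B_r(z)$ together with $|1+rp(z)|\ge B_r(z)$.
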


\begin{proof}
Denote  $p(z)=\frac{q+\overline{q}\omega(z)}{1-\omega(z)}\,$, cf. \eqref{G*-1}. Then \eqref{G*-2} implies
\begin{equation}\label{starGr}
\frac{wG'_r(w)}{G_r(w)}=\frac{1+rp\circ G_r(w)}{1+r\left(p\circ G_r(w)+ G_r(w) \cdot p' \circ G_r(w)\right)}\,.
\end{equation}
According to Corollary~\ref{corr-uiform-conv}, the inequality $|G_r(w)|\leq\frac{3}{1+r\Re q}$ holds for all $w\in \D$. Thus our aim is to find the range of  $\displaystyle\frac{1+rp(z)}{1+r\left(p(z)+ z  p' (z)\right)}$ whenever $|z|\leq \frac{3}{1+r\Re q}$.

By the Berkson--Porta representation~\eqref{b-p}, $\Re p \geq 0$.
The Riesz--Herglotz formula gives
\begin{equation*}%\label{HR-p}
p(z)=\int_{|\zeta|=1} \frac{1+z\overline{\zeta}}{1-z\overline{\zeta}}\,d\mu(\zeta)+i\gamma,
\end{equation*}
for some non-negative measure $\mu$ on the unit circle and a number $\gamma \in \R,$ so that
\[
p(0)=q=\int_{|\zeta|=1}d\mu(\zeta)+i\gamma.
\]
Denote also $B_r(z):=\Re (1+rp(z))$,  $C_r(z):=\left| rzp'(z) \right|$ and
\begin{equation*}\label{ineqA1}
 A_r(z,\zeta):=\frac{2 r\Re q|z\overline{\zeta}|}{1+r\Re q- 2\Re z\overline{\zeta} + |z\overline{\zeta}|^2 (1-r\Re q) }\,.
\end{equation*}
One can see that for all $z$ with $|z|<\frac{3}{1+r\Re q}$ and $\zeta$ with $|\zeta|=1$,
\begin{equation*}\label{ineqAr}
 A_r(z,\zeta)\leq A_r\left(\frac3{1+r\Re q},1 \right) =  A(r\Re q).
\end{equation*}
Also,
\begin{equation*}%\label{Re-mone}
B_r(z)=\int_{|\zeta|=1} \frac{1+r\Re q- 2\Re z\overline{\zeta} + |z\overline{\zeta}|^2 (1-r\Re q) }{\Re q |1-z\overline{\zeta}|^2}d\mu(\zeta).
\end{equation*}
Thus
\begin{eqnarray*}
% \nonumber % Remove numbering (before each equation)
C_r(z) &\leq& \int_{|\zeta|=1} \frac{2 r|z\overline{\zeta}|}{|1-z\overline{\zeta}|^2}d\mu(\zeta) \\
\nonumber   &=& \int_{|\zeta|=1}  A_r(z,\zeta)\frac{1+r\Re q- 2\Re z\overline{\zeta} + |z\overline{\zeta}|^2 (1-r\Re q) }{\Re q |1-z\overline{\zeta}|^2}d\mu(\zeta)\\
\nonumber  &\leq& A(r\Re q)B_r(z).
\end{eqnarray*}
Therefore,
\begin{equation*}%\label{CBA}
 \left|  \frac{rzp'(z)} {1+rp(z)}\right| \leq \frac{C_r(z)} {B_r(z)}\leq  A(r\Re q).
\end{equation*}

Thus the function $\displaystyle\frac{1+r\left(p(z)+ z  p' (z)\right)}{1+rp(z)}$ takes values in the disk centered at $1$ and of radius $A(r\Re q)$. A straightforward calculation based on formula~\eqref{starGr} shows that all of the values of $\displaystyle\frac{wG'_r(w)}{G_r(w)}$ belong to the disk centered at $\displaystyle \frac{1}{1-A^2(r\Re q)}$ and of radius $\displaystyle \frac{A(r\Re q)}{1-A^2(r\Re q)}\,.$ The proof is complete.
\end{proof}

This theorem describes the range of the function $\frac{wG'_r(w)}{G_r(w)}$. Considering this range from the point of view of Definition~\ref{def-starlike}, we get the following geometric conclusion.
\begin{corol}
Let $G_r\in \JJ_r$, where $r\Re q> r_0$ and $\theta \in \R$ with $|\theta| \leq \arccos \frac{6}{r\Re q}$.
Then $G_r$ is a $\theta$-spirallike function of order $$\displaystyle \alpha_{r,\theta}:=\frac{\cos \theta -A(r\Re q)}{(1-A^2(r\Re q))\cdot \cos \theta}\,.$$
Consequently, $G_r$ is starlike of order $\displaystyle \alpha_r:=\frac1{1+A(r\Re q)}$ and strongly starlike of order $\beta_r:=\displaystyle\frac2\pi \arcsin A(r\Re q)$.
\end{corol}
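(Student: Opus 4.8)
The plan is to read off all three assertions directly from the disk produced in Theorem~\ref{th-r-star}, so that everything reduces to elementary plane geometry. Write $A:=A(r\Re q)$ and recall that the function $\frac{wG_r'(w)}{G_r(w)}$ takes all its values in the closed disk $K$ centered at $c:=\frac{1}{1-A^2}$ with radius $\rho:=\frac{A}{1-A^2}$. Since $r\Re q>r_0$ and $r_0$ is the largest root of $A(r)=1$, the value $A$ lies in $(0,1)$; hence $c>\rho>0$, so $c-\rho=\frac{1-A}{1-A^2}=\frac{1}{1+A}>0$ and $K$ sits in the open right half-plane. I would first note that the image is in fact contained in the \emph{open} disk: the map is non-constant holomorphic, so by the open mapping theorem its image is open and cannot meet $\partial K$ while staying inside $\overline K$. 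This gives the strict inequalities demanded by Definition~\ref{def-starlike}.

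Next I would treat the spirallike claim, which contains starlikeness as the case $\theta=0$. For fixed $\theta$, the condition $\Re\bigl(e^{-i\theta}\frac{wG_r'(w)}{G_r(w)}\bigr)>\alpha\cos\theta$ for all $w\in\D$ is equivalent to requiring the whole disk $K$ to satisfy it. Because $c$ is real and positive, rotation by $e^{-i\theta}$ moves $K$ to the disk centered at $e^{-i\theta}c$ of the same radius, on which the real part attains its minimum $c\cos\theta-\rho$. Thus the sharp order is
\[
\alpha_{r,\theta}=\frac{c\cos\theta-\rho}{\cos\theta}=c-\frac{\rho}{\cos\theta}=\frac{\cos\theta-A}{(1-A^2)\cos\theta},
\]
exactly the stated value; putting $\theta=0$ yields starlikeness of order $\alpha_r=c-\rho=\frac{1}{1+A}$. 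For strong starlikeness I would use that $K$ lies in the right half-plane and does not contain the origin, so the maximal value of $|\arg|$ over $K$ equals the half-opening of the cone at the origin tangent to $K$, namely $\arcsin(\rho/c)=\arcsin A$. Hence $\bigl|\arg\frac{wG_r'(w)}{G_r(w)}\bigr|<\arcsin A=\frac{\pi}{2}\cdot\frac{2}{\pi}\arcsin A$, giving order $\beta_r=\frac2\pi\arcsin A$.

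It remains to check that the reported orders are admissible, i.e. lie in $(0,1)$. For $\alpha_r$ and $\beta_r$ this is immediate from $A\in(0,1)$, and one checks $\alpha_{r,\theta}<1$ since $A\cos\theta<1$. The only point requiring an actual computation is positivity of $\alpha_{r,\theta}$, which needs $\cos\theta>A$; the stated range $|\theta|\le\arccos\frac{6}{r\Re q}$ forces $\cos\theta\ge\frac{6}{r\Re q}$, so it suffices to verify $A(r\Re q)\le\frac{6}{r\Re q}$. Clearing denominators in \eqref{Ar} (with $s:=r\Re q$) reduces this to $s^2-6s+2\ge0$, i.e. $s\ge 3+\sqrt7$, which holds because $r_0\approx5.92>3+\sqrt7$. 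This consistency check is the one non-cosmetic step; everything else is the geometry of a single disk in the right half-plane, so I expect no genuine obstacle beyond bookkeeping of the algebraic simplifications.
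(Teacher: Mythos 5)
Your proof is correct and follows the same route as the paper: the corollary is presented there as an immediate geometric reading of the disk $\left|\frac{wG_r'(w)}{G_r(w)}-\frac{1}{1-A^2}\right|\le\frac{A}{1-A^2}$ from Theorem~\ref{th-r-star}, and you simply carry out that reading (minimum of $\Re(e^{-i\theta}\zeta)$ over the disk, tangent-cone angle $\arcsin(\rho/c)=\arcsin A$). Your consistency check $A(r\Re q)\le\frac{6}{r\Re q}$, reducing to $s^2-6s+2\ge0$ for $s>r_0$, is the same estimate the paper records in Remark~\ref{rem-1}, and your open-mapping remark correctly supplies the strict inequalities the paper leaves implicit.
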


\begin{remar}\label{rem-1}
One can see that if $r\Re q > 6$ then $A(r) < \frac{6}{6-r_0+r}< \frac6r$. This enables to estimate orders of spirallikeness, starlikeness and strong starlikeness. Namely, the order of $\theta$-spirallikeness $\alpha_{r,\theta}$ is greater than  $\frac{r\Re q(r\Re q\cos \theta-6)}{((r\Re q)^2  -36)\cos \theta }$,
order of starlikeness $\alpha_r$ is greater than  $\frac{6-r_0+r\Re q}{12-r_0+r\Re q}>\frac{r\Re q }{6+r \Re q }\,,$ order of strong starlikeness $\beta_r$ is less than $\frac2\pi\arcsin\frac6{6-r_0+r\Re q}< \frac2\pi \arcsin \frac{6}{r\Re q}\,.$
\end{remar}

Recall that a homeomorphism $h: \C\to\C$ is called $k$-quasiconformal if $h$ has locally integrable partial derivatives on $\C$ (in the sense of distributions), which satisfy $\left|f'_{\overline{z}}\right| \le k\left|f'_z\right|$ a.e. It was proved in \cite{F-K-Z} (see also \cite{Su-12}) that any strongly starlike function of order $\alpha$ extends to a $\sin(\pi\alpha/2)$-quasiconformal automorphism of $\C$. Therefore, Theorem~\ref{th-r-star} immediately implies
\begin{corol}
  Any function $G_r\in\JJ_r,\ r\Re q> r_0$, can be extended to a $k$-quasiconformal  automorphism of \,$\C$ with  $k=A(r\Re q)$ the following.
\end{corol}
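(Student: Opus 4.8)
The plan is to read off the order of strong starlikeness already established for $G_r$ and to feed it directly into the quasiconformal extension criterion quoted just above the statement. Since $r\Re q>r_0$ and $r_0$ is the largest root of $A(r)=1$ (with $A$ decreasing to $0$ for large argument), we first record that $A(r\Re q)<1$; hence the corollary preceding this one, itself a consequence of the range description in Theorem~\ref{th-r-star}, asserts that $G_r$ is strongly starlike of order $\beta_r=\frac2\pi\arcsin A(r\Re q)\in(0,1)$.

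Next I would invoke the result of \cite{F-K-Z} (see also \cite{Su-12}): every strongly starlike function of order $\alpha$ extends to a $\sin(\pi\alpha/2)$-quasiconformal automorphism of $\C$. Applying this with $\alpha=\beta_r$ yields a quasiconformal extension of $G_r$ whose dilatation bound is $\sin(\pi\beta_r/2)$. It then remains only to simplify this constant, which a one-line computation accomplishes:
\[
\sin\!\left(\frac{\pi\beta_r}{2}\right)=\sin\!\left(\frac{\pi}{2}\cdot\frac2\pi\arcsin A(r\Re q)\right)=\sin\bigl(\arcsin A(r\Re q)\bigr)=A(r\Re q),
\]
so that $G_r$ extends to a $k$-quasiconformal automorphism of $\C$ with $k=A(r\Re q)$, as claimed.

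There is essentially no analytic obstacle: the statement is a formal corollary obtained by composing the order of strong starlikeness with a known extension theorem and collapsing the resulting constant through the identity $\sin\circ\arcsin=\mathrm{id}$. The only point that genuinely requires the hypothesis is the verification that $\beta_r<1$, equivalently $A(r\Re q)<1$; this is exactly what $r\Re q>r_0$ secures, and it is needed both for $\beta_r$ to be an admissible order of strong starlikeness and for $k=A(r\Re q)$ to be a bona fide quasiconformality constant.
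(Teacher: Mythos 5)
Your proposal is correct and follows exactly the paper's route: the corollary is obtained by combining the strong starlikeness order $\beta_r=\frac2\pi\arcsin A(r\Re q)$ from Theorem~\ref{th-r-star} with the Fait--Krzy\.{z}--Zygmunt extension theorem, and the constant collapses via $\sin(\pi\beta_r/2)=A(r\Re q)$. Your added observation that $r\Re q>r_0$ guarantees $A(r\Re q)<1$ is a worthwhile explicit check that the paper leaves implicit.
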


\medskip

Every resolvent $G_r$ is actually a generator itself, see \cite{E-S-S}. Therefore it is natural to ask about properties of semigroups generated by nonlinear resolvents. In the next assertion we establish the uniform convergence (on the whole disk $\D$) of such semigroups as well as their analyticity in a sector  with respect to the parameter $t$.

\begin{theorem}\label{thm-estim}
  Let $G_r\in\JJ_r$ with $r\ge \frac6{\Re q}\,$. Denote $\gamma_r:=\frac{1-A(r\Re q)}{1+A(r\Re q)}$, where function $A$ is denoted by \eqref{Ar}. Then for the semigroup $\{u(t,\cdot)\}_{t\ge0}$ generated by $G_r$ the following assertions hold:
  \begin{itemize}
    \item [(i)]  $u(t,\cdot)$  converges to $0$ as $r\to\infty$, uniformly on $\D $ with exponential squeezing coefficient $\kappa(r) :=\displaystyle\frac{\left( \Re(1+rq)^\frac1{\gamma_r} \right)^{\gamma_r}} {2^{1-\gamma_r}|1+rq|^2}\,;$
    \item [(ii)]  for every $z\in\D$, $u(\cdot,z)$ can be analytically extended to the sector $$\left\{t\in\C: \left|\arg t - \arg(1+rq)\right| < \frac{ \pi\gamma_r}{ 2}\right\}.$$
    \end{itemize}
  \end{theorem}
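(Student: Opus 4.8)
The plan is to exploit the fact, recalled in Section~\ref{sec-main}, that every resolvent $G_r$ is itself a generator whose Denjoy--Wolff point is the origin. Hence, by the Berkson--Porta representation~\eqref{b-p}, $G_r(z)=zP(z)$ with $P(z):=\frac{G_r(z)}{z}$ and $\Re P>0$, so that $P$ plays the role of the function $p$ in Theorems~\ref{thm_kappa} and~\ref{thm-analyt} applied to the generator $G_r$. Both assertions then become statements about the range of $P$: assertion~(i) amounts, via Theorem~\ref{thm_kappa}, to the lower bound $\Re P(z)\ge\kappa(r)$ on $\D$, while assertion~(ii) amounts, via Theorem~\ref{thm-analyt}, to a two-sided bound on $\arg P(z)$. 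Thus the whole proof reduces to locating the values of $P$.

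First I would record the governing subordination. Since $r\ge\frac6{\Re q}$ gives $r\Re q\ge6>r_0$, Theorem~\ref{th-r-star} and the corollary following it show that $G_r$ is starlike of order $\alpha_r=\frac1{1+A(r\Re q)}$. Writing $A:=A(r\Re q)$ and using $\gamma_r=\frac{1-A}{1+A}$ one checks the key identity $2(1-\alpha_r)=\frac{2A}{1+A}=1-\gamma_r$. The normalized resolvent $h:=(1+rq)G_r$ satisfies $h(0)=0$, $h'(0)=1$ and is starlike of order $\alpha_r$; therefore, by the classical structural subordination for the class of starlike functions of order $\alpha_r$ (see, e.g., \cite{G-K}),
\begin{equation*}
(1+rq)\,\frac{G_r(z)}{z}=\frac{h(z)}{z}\prec\frac1{(1-z)^{2(1-\alpha_r)}}=\frac1{(1-z)^{1-\gamma_r}}=:Q(z).
\end{equation*}
All further estimates are read off from $Q$.

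For assertion~(ii) I would use that $\arg(1-z)\in(-\frac\pi2,\frac\pi2)$, so $Q$ maps $\D$ into the sector $\left|\arg w\right|<\frac{\pi(1-\gamma_r)}2$. Consequently $\arg P(z)$ lies in the interval centred at $-\arg(1+rq)$ of half-length $\frac{\pi(1-\gamma_r)}2$. Feeding these bounds into Theorem~\ref{thm-analyt} (with $p=P$) gives the admissible opening angles $\alpha=\frac{\pi\gamma_r}2-\arg(1+rq)$ and $\beta=\frac{\pi\gamma_r}2+\arg(1+rq)$, whose sector $\{\arg t\in(-\alpha,\beta)\}$ is precisely $\{|\arg t-\arg(1+rq)|<\frac{\pi\gamma_r}2\}$, proving~(ii).

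For assertion~(i) I would compute a lower bound for $\Re P$. Put $b:=1+rq$ and $\psi:=\arg b$. The subordination means $bP(z)=Q(\omega(z))$ for some Schwarz function $\omega$, whence $\Re P(z)\ge\frac1{|b|}\inf_{w\in Q(\D)}\Re(e^{-i\psi}w)$. A direct check shows $Q$ is convex (indeed $\Re\!\left(1+\frac{zQ''}{Q'}\right)=\frac{\gamma_r}2\ge0$ on $\partial\D$), so $Q(\D)$ is convex and this infimum is a support-function value attained on $\partial Q(\D)$. Parametrising the boundary by $1-z=2\cos\varphi\,e^{i\varphi}$, $|\varphi|<\frac\pi2$, one minimises $2^{-(1-\gamma_r)}(\cos\varphi)^{-(1-\gamma_r)}\cos\!\big((1-\gamma_r)\varphi+\psi\big)$; its critical point is $\varphi^{*}=\psi/\gamma_r$, and substituting it yields
\begin{equation*}
\inf_{z\in\D}\Re P(z)=\frac{\big(\cos(\psi/\gamma_r)\big)^{\gamma_r}}{2^{1-\gamma_r}|b|}=\frac{\big(\Re b^{1/\gamma_r}\big)^{\gamma_r}}{2^{1-\gamma_r}|b|^2}=\kappa(r),
\end{equation*}
the middle equality because $\Re b^{1/\gamma_r}=|b|^{1/\gamma_r}\cos(\psi/\gamma_r)$. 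Theorem~\ref{thm_kappa} then gives $|u(t,z)|\le|z|e^{-\kappa(r)t}$, hence uniform convergence to $0$ on $\D$, which is~(i). The main obstacle is exactly this minimisation: one must verify the convexity of $Q(\D)$, check that $\psi=\arg(1+rq)$ is small enough that $\varphi^{*}=\psi/\gamma_r$ is an interior critical point with $\cos(\psi/\gamma_r)>0$ (equivalently that the opening angles of~(ii) fall in $(0,\frac\pi2)$), and confirm that $\varphi^{*}$ is the minimiser rather than a spurious extremum; the subsequent algebraic identification with the stated closed form for $\kappa(r)$ is then routine.
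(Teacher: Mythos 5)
Your argument is essentially the paper's own proof: both rest on the order of starlikeness $\alpha_r$ from Theorem~\ref{th-r-star}, note that $2(1-\alpha_r)=1-\gamma_r$, pin down the range of $\frac{(1+rq)G_r(z)}{z}$ (your subordination to $(1-z)^{-(1-\gamma_r)}$ is equivalent to the paper's statement that $\Re\left(\frac{(1+rq)G_r(z)}{z}\right)^{1/(1-\gamma_r)}>\frac12$), carry out the same boundary minimisation for $\kappa(r)$ (the paper parametrises the half-plane boundary as $\frac12+it$ rather than via $1-z=2\cos\varphi\,e^{i\varphi}$), and finish by Theorems~\ref{thm_kappa} and~\ref{thm-analyt}. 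The caveat you flag --- that $|\arg(1+rq)|$ must be less than $\frac{\pi\gamma_r}{2}$ for the critical point to be a genuine minimiser (and for the sector in (ii) to make sense) --- is equally implicit in the paper's computation, so it does not count against you.
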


\begin{proof}
  Since the function $G_r$ is starlike of order $\alpha_r$ by Theorem~\ref{th-r-star}, the function $(1+rq)G_r$ is a normalized starlike function of the same order, and hence admits the integral representation
\[
(1+rq)G_r(z)=z \exp\left[ -2(1-\alpha_r)\oint_{\partial\D} \log\left( 1-z\overline{\zeta}\right)d\mu_r(\zeta) \right]
\]
  with some probability measure $\mu_r$ on the unit circle.
  Therefore the function $z\left(\frac{(1+rq)G_r(z)}z\right)^{\frac1{2(1-\alpha_r)}}=z\left(\frac{(1+rq)G_r(z)}z\right)^{\frac1{1-\gamma_r}}$ is starlike of order $\frac12$\,. Consequently,
 \begin{equation}\label{aux1}
  \Re \left(\frac{(1+rq)G_r(z)}z\right)^{\frac1{1-\gamma_r}}>\frac12\,.
  \end{equation}

According to Theorem~\ref{thm_kappa}, to prove assertion (i), we have to show that $ \Re \frac{G_r(z)}{z} > \kappa(r).$
Let denote for short $w=\frac{G_r(z)}z$ and $B(r)=1-\gamma_r$.
Our aim is to minimize $\Re w$ under the condition $\Re \left((1+rq)w\right)^{\frac1{B(r)}} = \frac12\,,$ see \eqref{aux1}. In other words, we have to minimize the function $\zeta(t):=\Re\frac{\left(\frac12+it\right)^{B(r)}}{1+rq}\,.$ Equating $\zeta'(t)$ to zero, we get $\arg \left(\frac12+it\right)^{1-B(r)} = \arg(1+r\overline q)$ at the minimal point of $\zeta$, or equivalently, $\frac12+it = M(1+r\overline q)^{\frac1{1-B(r)}}$ for some $M>0.$ This leads to $\frac12+it =\frac{(1+r\overline q)^{\frac1{1-B(r)}}}{2\Re (1+rq)^{\frac1{1-B(r)}}}\,. $ Thus
\begin{eqnarray*}
% \nonumber % Remove numbering (before each equation)
  \min_{t\in\R} \zeta(t) &=& \Re\frac{(1+r\overline q)^{\frac{B(r)}{1-B(r)}}} {(1+rq)2^{B(r)} \left(\Re(1+rq)^{\frac1{1-B(r)}} \right)^{B(r)}} \\
    &=&  \frac{\left( \Re(1+rq)^\frac1{1-B(r)} \right)^{1-B(r)}} {2^{B(r)}|1+rq|^2}=\kappa(r),
\end{eqnarray*}
and we are done.

To prove assertion (ii), we use the same notations. We now estimate the values of $\arg w$. In other words, values of the function $\xi(t):=\arg\frac{\left(\frac12+it\right)^{B(r)}}{1+rq} = B(r)\arg\left(\frac12+it\right) - \arg(1+rq).$ Obviously, $$\xi(t)\in \left(-\frac\pi2 B(r)-\arg(1+rq),\frac\pi2 B(r)-\arg(1+rq) \right).$$
Applying Theorem~\ref{thm-analyt}, we complete the proof.
 \end{proof}

 Similarly to Remark \ref{rem-1}, we note that $\gamma_r>\frac{r\Re q-r_0}{12-r_0+r\Re q}>\frac{r\Re q-6}{r\Re q +6}\,.$

\begin{corol}\label{cor-tends_to_z}
The net of functions  $\left\{(1+rq)G_r(z)\right\}$ converges to $z$  as $r\to \infty$, uniformly on compact subsets of the unit disk.
\end{corol}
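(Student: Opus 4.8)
The plan is to read this convergence off directly from the integral representation already obtained in the proof of Theorem~\ref{thm-estim}. There each normalized resolvent was written, for $r\Re q>r_0$, as
\[
(1+rq)G_r(z)=z\exp\left[-2(1-\alpha_r)\oint_{\partial\D}\log\left(1-z\overline\zeta\right)\,d\mu_r(\zeta)\right],
\]
where $\mu_r$ is a probability measure on the unit circle and $\alpha_r=\frac1{1+A(r\Re q)}$ is the order of starlikeness furnished by Theorem~\ref{th-r-star}. With the abbreviation $B(r)=2(1-\alpha_r)=1-\gamma_r=\frac{2A(r\Re q)}{1+A(r\Re q)}$, the exponent equals $-B(r)$ times the integral, so the whole statement reduces to showing that this exponent tends to $0$ uniformly on compact subsets.

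First I would record that $B(r)\to0$ as $r\to\infty$. This is immediate from the estimate noted right after Theorem~\ref{thm-estim}, namely $\gamma_r>\frac{r\Re q-6}{r\Re q+6}$, whose right-hand side tends to $1$; since $\gamma_r<1$ always (because $A(r\Re q)>0$), we get $\gamma_r\to1$ and hence $B(r)=1-\gamma_r\to0$. Equivalently, by Remark~\ref{rem-1}, $A(r\Re q)<\frac6{r\Re q}\to0$.

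Next I would bound the Herglotz-type integral uniformly on a fixed compact disk $\{|z|\le\rho\}$ with $\rho<1$. Since $|z|<1$ forces $\Re\left(1-z\overline\zeta\right)\ge1-|z|>0$ for every $|\zeta|=1$, the principal branch of $\log\left(1-z\overline\zeta\right)$ is well defined; moreover $w=1-z\overline\zeta$ lies in the disk of radius $\rho$ centered at $1$, giving the uniform bound $\bigl|\log\left(1-z\overline\zeta\right)\bigr|\le\bigl|\log(1-\rho)\bigr|+\tfrac\pi2=:C_\rho$ for all $|z|\le\rho$ and $|\zeta|=1$. As $\mu_r$ is a probability measure, $\left|\oint_{\partial\D}\log\left(1-z\overline\zeta\right)\,d\mu_r(\zeta)\right|\le C_\rho$ independently of $r$ and of $z$ in the disk. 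Hence the exponent is bounded in modulus by $B(r)\,C_\rho\to0$ uniformly on $\{|z|\le\rho\}$, so the exponential factor converges to $1$ uniformly there, and $(1+rq)G_r(z)\to z$ uniformly on $\{|z|\le\rho\}$, which is the claim.

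I do not expect a real obstacle: the statement is essentially packaged inside the representation formula of Theorem~\ref{thm-estim}, and the only points needing care are the well-definedness of the logarithm (handled by $\Re\left(1-z\overline\zeta\right)>0$) and the fact that the measure $\mu_r$ varies with $r$—which is harmless because the bound $C_\rho$ on the integrand does not depend on $\mu_r$. Should one wish to avoid the representation, an alternative route would combine $G_r'(0)=\frac1{1+rq}$ with the distortion information of Theorem~\ref{th-r-star} (which confines $\frac{wG_r'(w)}{G_r(w)}$ to a disk shrinking to the single point $1$ as $r\to\infty$) together with a normal-families argument; but the exponential representation yields the uniform estimate most directly.
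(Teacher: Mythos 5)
Your proposal is correct and follows exactly the route the paper intends: the corollary is stated without proof immediately after Theorem~\ref{thm-estim}, whose proof supplies the integral representation $(1+rq)G_r(z)=z\exp\bigl[-2(1-\alpha_r)\oint_{\partial\D}\log(1-z\overline\zeta)\,d\mu_r(\zeta)\bigr]$, and the subsequent remark supplies $\gamma_r\to1$, i.e.\ $2(1-\alpha_r)=1-\gamma_r\to0$. Your uniform bound $C_\rho$ on the logarithmic integrand (independent of the $r$-dependent probability measure $\mu_r$) is the only detail the paper omits, and you have verified it correctly.
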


\medskip

\end{document}